\newcommand{\ignore}[1]{}
\newcommand{\Span}{{\rm span}}
\newcommand{\mT}{{\mathbb T}}
\newcommand{\mZ}{{\mathbb Z}}
\begin{document}
\newtheorem{thm}{Theorem}
\newtheorem{cor}[thm]{Corollary}
\newtheorem{lemma}[thm]{Lemma}
\newtheorem{prop}[thm]{Proposition}
\newtheorem{problem}[thm]{Problem}
\newtheorem{remark}{Remark}
\newtheorem{defn}[thm]{Definition}
\newtheorem{ex}[thm]{Example}

\newcommand{\gi}{g_{\rm i}}
\newcommand{\go}{g_{\rm o}}
\newcommand{\hi}{h_{\rm i}}
\newcommand{\ho}{h_{\rm o}}
\newcommand{\blambda}{{\bm \lambda}}
\newcommand{\mE}{\mathbb E}

\newcounter{tempEquationCounter}
\newcounter{thisEquationNumber}
\newenvironment{floatEq}
{\setcounter{thisEquationNumber}{\value{equation}}\addtocounter{equation}{1}
\begin{figure*}[!t]
\normalsize\setcounter{tempEquationCounter}{\value{equation}}
\setcounter{equation}{\value{thisEquationNumber}}
}
{\setcounter{equation}{\value{tempEquationCounter}}
\hrulefill\vspace*{4pt}
\end{figure*}

}
\def\spacingset#1{\def\baselinestretch{#1}\small\normalsize}

\title{The r\^ole of the time-arrow in mean-square\\
estimation of stochastic processes}

\author{Yongxin Chen, Johan Karlsson, and Tryphon T.\ Georgiou  \thanks{Supported by
Swedish Research Council, NSF under Grant ECCS-1509387, the AFOSR under Grants FA9550-12-1-0319 and FA9550-15-1-0045, and the Vincentine Hermes-Luh Endowment.
}  \thanks{
Y.~Chen and T.T.~Georgiou are with the
Department of Electrical Engineering, University of Minnesota,
Minneapolis, Minnesota 55455, USA; {\{chen2468,tryphon\}@ece.umn.edu.} J.~Karlsson is with the Department of Mathematics, KTH Royal Institute of Technology, Stockholm, Sweden; {
johan.karlsson@math.kth.se}.} }
\spacingset{1}

\maketitle

{\abstract The purpose of this paper is to explain a certain dichotomy between the information that the past and future values of a {\em multivariate} stochastic process carry about the present. More specifically, vector-valued, second-order stochastic processes may be deterministic in one time-direction and not the other. This phenomenon, which is absent in scalar-valued processes, is deeply rooted in the geometry of the shift-operator.
The exposition and the examples we discuss are based on the work of Douglas, Shapiro and Shields on cyclic vectors of the backward shift and relate to classical ideas going back to Wiener and Kolmogorov. We focus on rank-one stochastic processes for which we present a characterization of  all regular processes that are deterministic in the reverse time-direction.
The paper builds on examples and the goal is to provide pertinent insights to a control engineering audience.
}

\section{introduction}

The variance of the error in one-step-ahead prediction of a {\em scalar}, second-order, stationary, discrete-time stochastic processe is given by a well-known formula due to G.~Szeg\"o \cite{GreSze58} as the geometric mean
\begin{align}\label{eq:szego}
\exp\left\{\frac{1}{2\pi}\int_{-\pi}^\pi\log(\Phi(\theta))d\theta\right\}
\end{align}
of its power spectral density $\Phi(\theta)$.
Past and future of the process contain the same information about the present and the identical same formula provides the variance of the ``postdiction'' error when the present is estimated from future values. This is rather evident since \eqref{eq:szego} contains no manifestation of the time arrow. There is no such formula for the covariance matrix of the prediction or the postiction error for multivariable processes. The closest to such a formula was given when Wiener and Masani  \cite{WieMas57} expressed the determinant of the error covariance, herein denoted by $\Omega$, in terms of the determinent of the power spectrum,
\begin{align}\label{eq:wm}
\det(\Omega)=\exp\left\{\frac{1}{2\pi}\int_{-\pi}^\pi\log(\det(\Phi(\theta)))d\theta\right\}.
\end{align}
In a subtle way, when $\det(\Omega)=0$, this formula leaves out the possibility of a dichotomy between past and future, and as it turns out, this is indeed the case; it is perfectly possible for a (multivariable) stationary Gaussian stochastic process to be purely deterministic in one time-direction and not in the other.

Naturally, this issue has been duly noted in classical works in prediction theory where it has been pointed out that the information contained in the remote past and the information contained in the remote future may differ, see e.g., \cite[Section 4.5]{LinPic15}. Thus, the main objective of the present work is to highlight and elucidate this phenomenon with examples that are intuitively clear to an engineering audience.

More broadly, the manifestation of the time-arrow in engineering and physics is hardly a new issue, yet it is one that is not very well understood. The paradox of the apparent directionality of physics originating in physical laws that are time-symmetric is a key conundrum; Feynman states that there is a fundamental law which says, that "uxels only make wuxels and not vice versa," but we have not found this yet. Thus, the time reversibility of physical models, as well as the lack, are of great interest, see, e.g., \cite{GeoLin14, SanDelDoy11, GeoMal10}. In a similar vain, it is expected that the time-arrow will draw increasingly more attention in modeling of engineering systems as well.

Turning to time-series, the possible ways in which the time-arrow is encoded in the statistics have also been studied in the physics literature as well, see e.g., \cite{Ton90}. It is widely thought that the time-direction and ``nonlinearities'' are revealed by considering several-point correlations and higher order statistics. While this may be so at times, it is surprising to most that the time-arrow may be clearly discerned in second-order stationary processes as well, in that their predictability properties may dramatically differ depending on the time-direction. The reason that this observation is often missed (cf.\ \cite{DikVanTakDeg95, Wei75}) may be due to the fact that it is exclusively a phenomenon of vector-valued processes. Below, we explain this point with an example of a vector-valued moving-average process constructed so that the prediction error differs substantially in the two time-directions (Section \ref{sec:MA}). A limit case for a stochastic process with infinite memory allows for the process to be deterministic in one of the two time-directions but not in the other - we highlight this with an example as well.

Prediction theory of second-order processes overlaps with that of analytic functions on the unit disc and the shift operator. Thus, the exposition and technical results of the paper rely heavily on this connection and on the work of Douglas, Shapiro and Shields \cite{DouShaShi70} who obtained a characterization of cyclic vectors of the ``backward shift.'' Our analysis and examples include processes generated by filters whose transfer functions are cyclic with respect to the backward shift, or in a time-symmetric situation, processes generated by suitable acausal filters that are predictable from the infinite remote past. Besides explaining the dichotomy between past and future, and on how this relates to factorizability of the power spectrum \cite{bloomfield1983characterizations, Roz60}, we also study regular rank-one processes and explicitly characterize all such processes that are deterministic in the reverse-time direction.


The present paper is structured as follows.
In Section \ref{sec:prel} we remind the reader about connections between second-order stochastic processes and analytic functions.
We define cyclic vectors and recall key results from \cite{DouShaShi70,Hof07,Hel13}. In Section \ref{sec:MA} we present an example of a moving-average process and we compare the corresponding predictor and postdictor error covariances. In Section \ref{sec:nonreverse} we present an example that is non-deterministic in one time-direction but not so in the other. In Section~\ref{sec:CharRank1} we characterize the rank-one stochastic processes that have this property. We conclude with a discussion on factorizability and decompositions for this class of processes.

\newcommand{\X}{{X}}
The notation used in this paper is now briefly defined. We let $L_2$ be the space of square-integrable functions on the unit circle $\mT$, $H_2$ be the Hardy space of functions in $L_2$ whose negative Fourier coefficients vanish, $l_2$ be the space of square summable sequences, and let $\|\cdot\|_2$ denote the norm in the respective spaces. The orthogonal complement of $H_2$ in $L_2$ is denoted by $H_2^\perp$, while we use $H_2^-$ as a short for $zH_2^\perp$. We use ${\rm span}_k(x_k)$ to denote the space of all finite linear combinations of elements in $\{x_k\}_k$, a set of random variables on a suitable probability space. We also use $\X_1+\X_2=\{x_1+x_2\,:\,x_1\in \X_1, x_2\in \X_2\}$ for subspaces $\X_1$ and $\X_2$. Similarly $\sum_{j=1}^n \X_j=\{\sum_{j=1}^n x_j\,:\, x_j\in \X_j,\,j=1,\ldots, n\}$.


\section{Preliminaries}\label{sec:prel}

The forward shift $U$ is a linear operator on the Hardy space $H_2$ defined as $Uf(z)=zf(z)$. We will often identify $H_2$ and $l_2$, since they are isometric, and thus write
    \[
        U: (a_0,a_1,a_2,\ldots)\rightarrow (0,a_0,a_1,\ldots).
    \]
The backward shift $U^*$ is the adjoint operator of $U$. On $H_2$ we have $U^*f(z)=(f(z)-f(0))/z$, and accordingly we may write
    \[
        U^*: (a_0,a_1,a_2,\ldots)\rightarrow (a_1,a_2,a_3,\ldots).
    \]
Cyclic vectors of an operator $A$ are those vectors $f$ such that the closure of the span of $\{A^n f: n\ge 0\}$ is the whole space; when $f$ is not a cyclic vector (non-cyclic), the closure of the span is a proper $A$-invariant subspace.\footnote{A subspace $X$ is $A$-invariant if $AX\subset X$.} As is well known, $f\in H_2$ is cyclic for $U$ if and only if $f$ is an {\em outer} function.\footnote{Outer functions are also known in the engineering literature as {\em minimum-phase}; for definition and properties see \cite{Rud86}.}
 When this is not the case, $f$ lies in some closed invariant subspace of $U$, that is, a subspace of the form $\varphi H_2$ for some {\em inner}\footnote{Inner functions are known in the engineering literature as {\em all-pass}; for definition and properties see again \cite{Rud86}.} function $\varphi$. An invariant subspace of $U^*$ is of the form $(\varphi H_2)^\bot$. Therefore $f$ fails to be cyclic under $U^*$ if and only if it is orthogonal to one of the spaces $\varphi H_2$ with $\varphi$ inner. This is not a property that can be easily checked! A more transparent condition for failure to be cyclic with respect to $U^*$ is given by the following theorem of Douglas-Shapiro-Shields \cite{DouShaShi70}.
\begin{thm}\label{thm:DSS}
    A necessary and sufficient condition that a function $f$ in $H_2$ be $U^*$ non-cyclic is that there exists a pair of inner functions $\varphi$ and $\psi$ such that
        \[
            \frac{f}{\bar{f}}=\frac{\varphi}{\psi} ~~~\text{almost everywhere on $\mT$}.
        \]
\end{thm}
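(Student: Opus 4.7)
The plan is to reduce the statement to Beurling's characterization of the non-trivial closed $U$-invariant subspaces of $H_2$, which asserts that every such subspace is of the form $\psi_0 H_2$ for some inner $\psi_0$. Since every closed $U^*$-invariant subspace arises as the orthogonal complement of a closed $U$-invariant subspace, the closed span of $\{U^{*n}f:n\ge 0\}$ is proper precisely when $f \perp \psi_0 H_2$ for some non-constant inner $\psi_0$. So the theorem reduces to showing that existence of such a $\psi_0$ is equivalent to $f/\bar f$ being a quotient of two inner functions almost everywhere on $\mT$.

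The first technical step is a short Fourier-support computation. Writing out $\langle f, \psi_0 h\rangle_{L_2} = 0$ for every $h \in H_2$ and taking conjugates, I would obtain $\langle \bar f\psi_0, \bar h\rangle_{L_2} = 0$ for every $h \in H_2$, which is equivalent to $\bar f\psi_0 \in (\overline{H_2})^\perp = zH_2$. In particular $\bar f\psi_0$ lies in $H_2$, with $|\bar f\psi_0| = |f|$ almost everywhere on $\mT$.

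For the necessity direction I would then appeal to inner-outer factorization. Let $f = \varphi_1 F$ and $\bar f\psi_0 = \varphi_2 G$ be factorizations into inner and outer parts. The modulus identity $|G| = |f| = |F|$ a.e.\ on $\mT$, combined with the uniqueness of outer functions up to a unimodular constant, forces $G = cF$ for some unimodular $c$. Rearranging gives
\[
\frac{f}{\bar f} \;=\; \frac{\varphi_1 F\,\psi_0}{c\,\varphi_2 F} \;=\; \frac{\varphi_1 \psi_0}{c\,\varphi_2},
\]
which exhibits $f/\bar f$ as a ratio of two inner functions (the unimodular constant is absorbed into either factor). For sufficiency, assume $f/\bar f = \varphi/\psi$ with $\varphi,\psi$ inner, so $f\psi = \bar f\varphi$ a.e.\ on $\mT$. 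The left-hand side belongs to $H_2$ (product of $f\in H_2$ with a bounded function), hence so does $\bar f\varphi$; multiplying by $z$ then places $\bar f(z\varphi) \in zH_2$. Since $z\varphi$ is inner and non-constant, the equivalence from the previous paragraph yields $f \perp (z\varphi)H_2$, certifying $U^*$-non-cyclicity.

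The main obstacle, in my view, is the necessity direction: although $\bar f$ itself is not an $H_2$ function, multiplication by the inner $\psi_0$ furnished by Beurling's theorem pulls the product into $zH_2$, and only then can inner-outer factorization be invoked to extract $f/\bar f$ cleanly as a quotient of inner functions. Once this observation is made, sufficiency is a one-line manipulation.
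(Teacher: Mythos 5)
The paper states this result as Theorem~\ref{thm:DSS} citing Douglas--Shapiro--Shields and gives no proof of its own, so there is nothing internal to compare against; judged on its merits, your argument is correct and is essentially the classical proof from \cite{DouShaShi70}. The key reductions are all sound: Beurling's theorem identifies non-cyclicity with $f\perp\psi_0H_2$ for some non-constant inner $\psi_0$; the conjugation computation correctly converts this to $\bar f\psi_0\in zH_2$; uniqueness of the outer factor up to a unimodular constant then extracts $f/\bar f$ as a quotient of inner functions; and your replacement of $\varphi$ by $z\varphi$ in the sufficiency direction (to land in $zH_2$ rather than merely $H_2$, and to guarantee non-constancy) is exactly the right care. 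The only implicit hypothesis worth flagging is $f\not\equiv 0$, which is needed both for $f/\bar f$ to be defined and for the inner--outer factorization step.
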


There are several easy but quite surprising properties of $U^*$ cyclic functions, see \cite{DouShaShi70}. For instance, i) a function is $U^*$ cyclic if and only if its outer factor is, and ii) if $f$ is $U^*$ cyclic and $g$ is non-cyclic, then $f+g$, $fg$ and $f/g$ are all cyclic as long as they are in $H_2$. Throughout the rest of this paper, ``cyclic'' means cyclic with respect to $U^*$ unless otherwise stated.

In $L_2$, we define $Uf(z)=zf(z)$ and its inverse as $U^{-1}f(z)=z^{-1}f(z)$. The invariant subspaces of $L_2$ for $U$ and $U^{-1}$ are slightly different to those of $H_2$ and have been extensively studied in \cite{Hel13}. The following extension of Beurling-Lax theorem~\cite{Hel13} will be needed in this paper.
\begin{prop}
If a subspace $M$ of $L_2$ is invariant for $U^{-1}$, while not for $U$, then it has the form $M=qH_2^-$ for some unimodular function $q$.
\end{prop}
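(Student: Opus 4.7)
The plan is to reduce the statement to the classical Beurling--Helson characterization of simply $U$-invariant subspaces of $L_2$ by exploiting the time-reversal symmetry $z\mapsto\bar z$ on the unit circle. First I would introduce the flip operator $J:L_2\to L_2$ defined by $(Jf)(z)=f(\bar z)=f(1/z)$ for $z\in\mT$. A short computation shows that $J$ is a unitary involution, that it is multiplicative in the sense $J(fg)=(Jf)(Jg)$ whenever the product lies in $L_2$, that it intertwines the shifts via $JU=U^{-1}J$, and that at the level of Fourier coefficients it sends $a_n\mapsto a_{-n}$. In particular $J(H_2)=H_2^-$, and $|Jp|=|p|$ almost everywhere for any bounded $p$, so $J$ carries unimodular functions to unimodular functions.

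With $J$ in hand, I would set $N:=JM$. The intertwining relation $JU=U^{-1}J$ converts the hypothesis that $M$ is $U^{-1}$-invariant but not $U$-invariant into the statement that $N$ is $U$-invariant but not $U^{-1}$-invariant, i.e.\ $N$ is simply $U$-invariant in the sense of Beurling--Helson. The classical theorem for $L_2$ (see \cite{Hel13}) then yields $N=pH_2$ for some unimodular function $p$. Applying $J$ once more and using multiplicativity together with $J(H_2)=H_2^-$, I get
\begin{align*}
M \;=\; JN \;=\; J(p)\,J(H_2) \;=\; q\,H_2^-,
\end{align*}
where $q:=Jp$ is unimodular, as desired.

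The one delicate point, which I would expect to be the only real obstacle, is confirming that the Beurling--Helson dichotomy really forces the simply invariant branch here and not the doubly invariant branch $\chi_E L_2$ (with $E\subset\mT$ measurable). But any such $\chi_E L_2$ is automatically invariant under both $U$ and $U^{-1}$, and $J$ carries doubly invariant subspaces to doubly invariant subspaces; so if $N=JM$ were of that form, then $M$ would be invariant under $U$ as well, contradicting the hypothesis. Once this is ruled out, everything else is symmetry bookkeeping, and all of the substantive analytic content is packaged in the classical theorem being invoked.
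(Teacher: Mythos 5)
Your proof is correct. The paper itself gives no argument for this proposition---it simply cites Helson's characterization of simply invariant subspaces of $L_2$---and your reduction via the flip $J:f(z)\mapsto f(\bar z)$, which intertwines $U$ and $U^{-1}$, sends $H_2$ to $H_2^-$ and preserves unimodularity, is exactly the standard way to deduce the stated form from that reference; your handling of the doubly invariant branch of the Beurling--Helson dichotomy is also the right (and necessary) check.
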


The connection and correspondence between function theory on the unit disc and discrete-time, stationary stochastic processes is well known and we will make extensive use of the various facts given in the concise reference
 \cite[Chapter 10]{GreSze58}. The basis of the connection is the standard Kolmogorov isomorphism between the linear space generated by a second-order stochastic process $\{x_k \,|\, k\in \mZ \}$ and functions on $L_2$ (on the unit circle).
Throughout, in this paper, we follow the mathematical convention where $U$ (equivalently, multiplication by $z$ or $e^{i\theta}$) corresponds to unit time-delay for the corresponding process. Thus, in the usual slight abuse of notation, $z\;:\;x_k\mapsto x_{k-1}$ is the ``delay operator'' which is opposite to the way this is most often used in signal processing literature.\footnote{In signal procession, $z^{-1}$ is often used to denote delay and causal transfer functions of linear operators are analytic outside the unit disc; our convention is the opposite.}

%
%

\section{Comparison of predictor/postdictor error for a moving-average process} \label{sec:MA}

It is often suggested that for Gaussian stationary processes the time direction does not have an impact on the error variance (cf.\ \cite{DikVanTakDeg95, Wei75}). As noted earlier, {\em this is not so for multivariable processes}. We first illustrate this fact with the moving-average bivariate process defined as follows. Consider the difference equations,
\begin{subequations}\label{eq:MA}
\begin{eqnarray}
x_k&=&w_k+\alpha w_{k-1}\\
y_k&=&w_k,
\end{eqnarray}
\end{subequations}
where $\alpha\neq 0$ and the process $\{w_k\mid k\in \mZ\}$ is taken to be Gaussian, zero-mean, unit-variance and white, i.e., $E\{w_k \bar w_k\}=1$, and $E\{w_k \bar w_\ell\}=0$ for $k\neq \ell$, and consider the stochastic process
\[
\xi_k:=\left[\begin{matrix}x_k\\y_k\end{matrix}\right].
\]
We are interested in one-step ahead linear prediction.\footnote{Without loss of generality we consider only prediction of $x_0, y_0$ since all processes in this paper are stationary.} Thus, we seek to minimize the (matrix) error-variance
\[
\mE\{ (\xi_0-\hat\xi_{0|\rm past})(\xi_0-\hat\xi_{0|\rm past})^*\}
\]
in the positive-semidefinite sense. Here, $\hat\xi_{0|\rm past}$ is a function of past measurements
 $x_{-1}, x_{-2}, \ldots,$ and $y_{-1}, y_{-2},\ldots$. Since $w_0\perp x_{-\ell},y_{-\ell}$ for $\ell>0$, the solution is easily seen to be
\[
\hat\xi_{0|\rm past}=\left(\begin{array}{c}\hat x_{0|\rm past}\\ \hat y_{0|\rm past}\end{array}\right)= \left(\begin{array}{c}\alpha y_{-1}\\ 0\end{array}\right)
\]
with a corresponding (forward) error variance
\[
\min_{\hat\xi_{0|\rm past}}\! \mE\{ (\xi_0-\hat\xi_{0|\rm past})(\xi_0-\hat\xi_{0|\rm past})^*\}\!=:\Omega_{\rm f}\!=\!\left(\begin{array}{cc}1& 1\\ 1 &1\end{array}\right)\!.
\]

In the reverse time direction, since
\[
x_{k+1}-y_{k+1}=\alpha w_k,
\]
we can write the dynamics \eqref{eq:MA} as
\begin{eqnarray*}
x_{k}&=&(x_{k+1}-y_{k+1})/\alpha+\alpha w_{k-1}\\
y_{k}&=&(x_{k+1}-y_{k+1})/\alpha.
\end{eqnarray*}
Similar to the above argument for the forward time-direction, $w_{-1}$ is orthogonal to future measurements $x_{1}$, $x_{2}$,\ldots, and $y_{1}, y_{2}$,\ldots,
and hence, given future values, the optimal estimator for $x_{0}, y_{0}$ is
\begin{equation}\label{eq:Ex1optpred}\nonumber
\hat\xi_{0|\rm future}=
\left(\begin{array}{c}\hat x_{0|\rm future}\\ \hat y_{0|\rm future}\end{array}\right)= \left(\begin{array}{c}(x_{1}-y_{1})/\alpha\\ (x_{1}-y_{1})/\alpha\end{array}\right)
\end{equation}
with corresponding minimal (backward) error variance
\[
\min_{\hat\xi_{0|\rm future}} \mE\{ (\xi_0-\hat\xi_{0|\rm future})(\xi_0-\hat\xi_{0|\rm future})^*\}=:\Omega_{\rm b}
=\left(\begin{array}{cc}\alpha^2& 0\\ 0 &0\end{array}\right).
\]
The prediction problem is clearly not symmetric with respect to time, yet $\det \Omega_{\rm f}=\det \Omega_{\rm b}=0$ in agreement with the Wiener-Masani formula \cite{WieMas57}.

The above example is sufficient to underscore the dichotomy. The forward and reversed processes have similar realizations (cf.\ \cite{GeoLin14}). Indeed, we can easily see that
\begin{eqnarray*}
x_{k}&=&\alpha \tilde w_k+\tilde w_{k+1}\\
y_{k}&=&\tilde w_{k+1},
\end{eqnarray*}
is a realization for the backward process, where $\tilde w_k$ is a standard Gaussian white-noise process. The forward and backward realizations can be derived and correspond to
the left and the right analytic factors
\begin{eqnarray}
\Phi(z)&=&\left(\begin{array}{c}1+\alpha z\\1\end{array}\right)(1+\alpha z^{-1}, 1)\nonumber\\
&=&\left(\begin{array}{c}z^{-1}+\alpha \\z^{-1}\end{array}\right)(z+\alpha, z)\label{eq:SpecFac}
\end{eqnarray}
of the power spectrum $\Phi(z)$. It is possible to go one step further and construct examples where this factorization is not possible in one direction and, then, in a corresponding time-direction the process is completely deterministic.

\section{A non-reversible stochastic process}\label{sec:nonreverse}


The following example presents a situation where the power spectrum does not admit one of the two analytic factorizations and the underlying process is completely deterministic in one of the time-directions and not in the other. The stochastic process we consider
is generated by
\begin{eqnarray*}
x_k&=&w_k +\sum_{\ell=1}^\infty \frac{1}{1+\ell} w_{k-\ell},\\
y_k&=& w_{k}.
\end{eqnarray*}
The modeling filter
\[
g(z)=\sum_{\ell=0}^\infty \frac{1}{1+\ell}z^\ell
\]
for the $x_k$ component has as impulse response the harmonic series. Interestingly, while this is not a stable system in an input-output sense, when driven by a white-noise process, it generates a well-defined stochastic process with finite variance since the harmonic series is square-summable. Further, the function $g(z)$ is cyclic \cite{DouShaShi70} and, as we will see, a direct consequence is that the process is completely deterministic in the backward time-direction.

Since $w_0\perp x_{-\ell},y_{-\ell}$ for $\ell>0$, the optimal predictor is given by
\[
\hat\xi_{0|\rm past}=\left(\begin{array}{c}\hat x_{0|\rm past}\\ \hat y_{0|\rm past}\end{array}\right)= \left(\begin{array}{c}\sum_{\ell=1}^\infty \frac{1}{1+\ell} y_{-\ell} \\ 0\end{array}\right)
\]
with a corresponding (forward) error variance
\[
\inf_{\hat\xi_{0|\rm past}} \! \mE\{ (\xi_0-\hat\xi_{0|\rm past})(\xi_0-\hat\xi_{0|\rm past})^*\}\!=:\Omega_{\rm f}\!=\!\left(\begin{array}{cc}1& 1\\ 1 &1\end{array}\right)\!.
\]

In the reverse time-direction,
we estimate $x_0$, $y_0$ given future observations,
$x_\ell$, $y_\ell$, for $\ell>0$.
Since $w_\ell=y_\ell$, this is the same as estimating
$x_0$, $y_0$ given
\begin{eqnarray*}
\tilde x_k&:=&x_k-\sum_{\ell=0}^{k-2} \frac{1}{1+\ell} w_{k-\ell},\\
&=&\sum_{\ell=k-1}^\infty \frac{1}{1+\ell} w_{k-\ell},
\end{eqnarray*}
and $y_k$, for $k>0$.
Now,  
$\overline{\Span}_{k>1}\{y_k\}$ is orthogonal to $x_0,y_0, y_1$ and $\overline{\Span}_{k>0}\{\tilde x_k\}$,
and hence the estimation problem above is equivalent to estimating $x_0,y_0$ based on $y_1$ and $\tilde x_k$ for $k>0$. In fact, $y_1$ is not needed and as we will see, $x_0,y_0$ can be predicted with arbitrary precision based only on $\tilde x_k$ for $k>0$.
The relation between $\tilde x_k$ and $w_{k}$ for $k\in\mZ$ can be expressed as
\begin{equation}\label{eq:Hilbert}
{\bf \tilde x}:=\arraycolsep=2pt\def\arraystretch{1.4}
\left(\begin{array}{c}\tilde x_1\\\tilde x_2\\\vdots\\\tilde x_n\\\vdots\end{array}\right)=\left(\begin{array}{cccc}
1&\frac{1}{2}&\frac{1}{3}&\cdots\\
\frac{1}{2}&\frac{1}{3}&\frac{1}{4}&\cdots\\
\vdots&\vdots&\vdots&\\
\frac{1}{n}&\frac{1}{n+1}&\frac{1}{n+2}&\cdots\\
\vdots&\vdots&\vdots&
\end{array}\right)
\left(\begin{array}{c}w_1\\w_{0}\\w_{-1}\\\vdots\end{array}\right)=:{\mathcal H} {\bf w},
\end{equation}
where ${\mathcal H}$ denotes the (infinite) Hilbert matrix, or equivalently, the representation of a Hankel operator with symbol the harmonic series.
Note that the $(k+1)$th row of ${\mathcal H}$ corresponds to the backward shifted input responses
\[
\sum_{\ell=0}^\infty \frac{1}{k+\ell+1}z^\ell = U^{*k} g(z), \quad \mbox{ for } k=0, 1, \ldots,
\]
%
and since $g(z)$ is cyclic, we have $\overline{\Span}_{k\geq 0}\{U^{*k} g(z)\}=H_2$. 
Combining this and the Kolmogorov isomorphism we deduce that any linear combination of $\bf w$ with finite norm can be approximated by a finite linear combination of ${\bf \tilde x}$ with arbitrarily small error.
The infimum of the backward error variance is therefore
\[
\inf_{\hat\xi_{0|\rm future}} \mE\{ (\xi_0-\hat\xi_{0|\rm future})(\xi_0-\hat\xi_{0|\rm future})^*\}=:\Omega_{\rm b}
=\left(\begin{array}{cc}0& 0\\ 0 &0\end{array}\right)
\]
and the time series $\{\xi_k\}$ is uniquely determined by the infinite future (cf.\ \cite{GreSze58, LinPic15}).

A alternative path to arrive at the same conclusion and deduce that the backward prediction error variance is zero can be based on well-known properties of the Hilbert matrix in \eqref{eq:Hilbert}.
The Hilbert matrix does not have a discrete spectrum \cite{magnus1950spectrum}; see \cite{Cho83} for an elementary proof. Therefore, its range is dense \cite{friedman1970foundations}. That is, for any vector ${\bf r}=(r_1, r_0, r_{-1}, \ldots)^T \in l_2$, there is a vector ${\bf a}=(a_1,a_2,\ldots)^T\in l_2$, with a finite number of nonzero elements, making the difference
\begin{equation*}\label{eq:Hilbert2}
\|{\bf r}^T- {\bf a}^T{\mathcal H}\|_2
\end{equation*}
arbitrarily small. Thus, any linear combination of elements $w_{k}$ for $k\le 0$, namely ${\bf r}^T{\bf w}$, can be approximated by ${\bf a}^T{\bf \tilde x}$ with arbitrary precision. Therefore, any element $x_k, y_k, w_k$ with $k\in\mZ$ is either known or can be predicted with arbitrary precision.


\section{Characterization of backward deterministic rank-one processes} \label{sec:CharRank1}
An n-dimensional Gaussian stochastic process is regular if its spectrum admits a (right) analytic factorization (see e.g., \cite{Roz60}), and hence may be represented  in the form
\begin{equation*}
\xi_k= \sum_{\ell=0}^\infty G_{\ell}w_{k-\ell}
\end{equation*}
where $w_k$ is a white-noise process, and the sequence $\{G_k\}_{k\ge 0}\in l_2^n$
\cite{Roz60,WieMas57}. Rank-one regular processes are those where the white-noise process $w_k$ may be taken as a scalar process.

Building on the example from the previous section and using results from \cite{DouShaShi70}, we characterize all the regular rank-one processes that are backward deterministic. We start by identifying a subclass of bivariate processes that contains the example from Section~\ref{sec:nonreverse}. Below we take $G_\ell=(g_\ell,h_\ell)^T\in{\mathbb C}^2$, that is $g_\ell,h_\ell\in\mathbb C$.
\begin{thm}\label{thm:pcnc}
Consider the stochastic processes
\begin{subequations}\label{eq:SP}
\begin{eqnarray}
x_k&=&\sum_{\ell=0}^\infty g_\ell w_{k-\ell},\\
y_k&=& \sum_{\ell=0}^\infty h_\ell w_{k-\ell}
\end{eqnarray}
\end{subequations}
where $g(z)=\sum_{\ell=0}^\infty g_\ell z^\ell$ is cyclic and $h(z)=\sum_{\ell=0}^\infty h_\ell z^\ell\neq 0$ is non-cyclic. Then the backward process is deterministic.
\end{thm}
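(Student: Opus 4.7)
The plan is to translate the assertion, via the Kolmogorov isomorphism $w_k\leftrightarrow z^{-k}$, into a statement about a subspace of $L_2(\mT)$. Under this correspondence $x_k\leftrightarrow z^{-k}g(z)$ and $y_k\leftrightarrow z^{-k}h(z)$, so the closed span of the entire $\xi$-process is a doubly shift-invariant subspace of $L_2$; since $g\in H_2$ is nonzero (hence nonzero a.e.), Wiener's theorem forces this span to be all of $L_2$. Backward determinism is then the statement that $\xi_0\in \mathcal{T}:=\overline{\Span}_{k\ge 1}\{z^{-k}g,\,z^{-k}h\}$, and it suffices to establish $\mathcal{T}=L_2$, which I plan to do by computing the annihilator and showing $\mathcal{T}^\perp=\{0\}$. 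A direct Fourier calculation yields
\[
\mathcal{T}^\perp \;=\; \{\,f\in L_2 \,:\, f\bar g,\;f\bar h\in H_2\,\}.
\]

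Next, I fix an arbitrary $f\in\mathcal{T}^\perp$ and set $F:=f\bar g$ and $G:=f\bar h$, both in $H_2$; they satisfy the pointwise identity $F\bar h=G\bar g$. The key algebraic move is to invoke Theorem~\ref{thm:DSS} for the non-cyclic factor $h$, producing inner functions $\varphi,\psi$ with $h\psi=\bar h\varphi$, equivalently $\bar h=h\psi/\varphi$. Substituting this into the identity and clearing $\varphi$ yields
\[
\bar g\,(G\varphi) \;=\; F\,h\,\psi,
\]
whose right-hand side lies in $H_2$ as a product of three $H_2$ factors. Setting $K:=G\varphi\in H_2$, this step produces an element $K\in H_2$ such that $\bar g K\in H_2$ as well.

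The final step exploits the $U^*$-cyclicity of $g$ through the following lemma: if $K\in H_2$ and $\bar g K\in H_2$, then $K=0$. A short Fourier-coefficient computation identifies $\widehat{\bar g K}(-m)$ with the $H_2$-inner product $\langle K,U^{*m}g\rangle$ for every $m\ge 1$, so $\bar g K\in H_2$ says exactly that $K$ is orthogonal to every $U^{*m}g$ with $m\ge 1$. Since $U^*$ is surjective on $H_2$, the closed span of $\{U^{*m}g\}_{m\ge 1}$ coincides with that of $\{U^{*m}g\}_{m\ge 0}$, and the latter equals $H_2$ by cyclicity of $g$; hence $K=0$. As $\varphi$ is inner (nonzero a.e.) this forces $G=f\bar h=0$, and since $h\ne 0$ a.e.\ we conclude $f=0$, giving $\mathcal{T}^\perp=\{0\}$ and the theorem. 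I anticipate the Fourier-coefficient identification $\widehat{\bar g K}(-m)=\langle K,U^{*m}g\rangle$, together with the minor observation that $U^*$-surjectivity lets one replace $m\ge 0$ by $m\ge 1$, to be the only ingredient requiring careful verification; everything else is a direct application of Kolmogorov's dictionary and the DSS theorem.
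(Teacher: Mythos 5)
Your overall strategy---dualizing the problem, computing the annihilator of the future span $\mathcal{T}$, and then combining Theorem~\ref{thm:DSS} applied to $h$ with cyclicity of $g$ to show the annihilator is trivial---is a legitimate alternative to the paper's argument, which instead builds explicit polynomial predictors and bounds the error via Lemma~\ref{lm:hminus}. The conceptual skeleton is sound. The execution, however, has a genuine gap in the function-space bookkeeping, and it sits exactly under the step your proof leans on hardest.

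The product of two $L_2$ (or $H_2$) functions on $\mT$ lies only in $L_1$ (resp.\ the Hardy class $H_1$), and a product of three lies only in $L_{2/3}$. So for $f\in\mathcal{T}^\perp$ you obtain $F=f\bar g$ and $G=f\bar h$ in $H_1$, not $H_2$; consequently $K=G\varphi$ is only in $H_1$, and $\bar gK=Fh\psi=f\bar g\bar h\varphi$ is a product of three $L_2$ functions, hence merely in $L_{2/3}$, where Fourier coefficients need not exist as convergent integrals. Your closing lemma (``$K\in H_2$ and $\bar gK\in H_2$ imply $K=0$'') is correct as stated, but its hypotheses are not met by the $K$ your argument produces: the pairing $\langle K,U^{*m}g\rangle=\int K\,\overline{U^{*m}g}$ matches an $H_1$ function against an $H_2$ function and can fail to be absolutely integrable, so the orthogonality argument does not apply. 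The gap is bridgeable, but by a different argument: note that $K=G\varphi=fh\psi\in H_1$, so if $f\neq 0$ then $K\neq 0$ a.e., and $\bar g K=F\,h\psi$ lies in the Smirnov class $N^+$; writing $\bar g=(Fh\psi)/K$ gives $g/\bar g=gK/(Fh\psi)$ as a quotient of two $N^+$ functions of equal modulus, whence their outer parts agree and $g/\bar g\in\mathcal{J}$, contradicting cyclicity of $g$ by Theorem~\ref{thm:DSS}. You should either supply this $N^+$/inner--outer argument or restrict all multiplications to unimodular factors, which is precisely what the paper's machinery (Lemma~\ref{lm:qchar} and the subspaces $qH_2^-$ with $q=g/\bar{g}_{\rm outer}$ unimodular) is designed to do.
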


To show this, we need the following lemma.
\begin{lemma} \label{lm:hminus}
If $h$ is a non-cyclic function, then there exists an inner function $\psi$ such that
\[
\overline{\Span}_{k> 0}(z^{-k}\psi  h)\supset H_2^\perp.
\]
\end{lemma}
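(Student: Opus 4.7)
My plan is to combine the Douglas--Shapiro--Shields theorem with the extension of Beurling--Lax stated in the preceding proposition. DSS furnishes inner functions $\varphi_0,\psi_0$ with $\psi_0 h=\varphi_0\bar h$ on $\mT$; this pair can be multiplied by any common inner factor without changing the relation, and this freedom will be exploited below.

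The first step is to identify the smallest closed $U^{-1}$-invariant subspace of $L_2$ containing $\bar h$, namely
\[
M\;:=\;\overline{\Span}_{k\ge 0}(z^{-k}\bar h).
\]
Each $z^{-k}\bar h$ with $k\ge 0$ has Fourier support in $\{n\le 0\}$, so $M\subseteq H_2^-$; $M$ is closed and $U^{-1}$-invariant, and (generically) not $U$-invariant, for otherwise $z\bar h\in M\subseteq H_2^-$ would force $h(0)=0$, a case one first reduces by factoring $z$ out of $h$ and iterating. The preceding proposition then yields $M=qH_2^-$ for some unimodular $q$, and $M\subseteq H_2^-$ forces $q\in H_2^-$; together with unimodularity this gives $q=\bar\chi$ for an inner function $\chi$.

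Next, set $\psi:=\chi\psi_0$ and $\varphi:=\chi\varphi_0$; both are inner and still satisfy $\psi h=\varphi\bar h$. Using this identity together with the fact that multiplication by the unimodular $\varphi$ is an $L_2$-isometry, the target inclusion $\overline{\Span}_{k>0}(z^{-k}\psi h)\supset H_2^\perp$ is equivalent to $\overline{\Span}_{k>0}(z^{-k}\bar h)\supset \bar\varphi H_2^\perp$; multiplying by $z$ and using $H_2^-=zH_2^\perp$, this is in turn equivalent to $M\supseteq \bar\varphi H_2^-$, i.e., $\bar\varphi H_2^-\subseteq \bar\chi H_2^-$.

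Finally, $\bar\varphi H_2^-\subseteq \bar\chi H_2^-$ holds iff $\bar\chi\varphi$ is inner, and with the above choice $\bar\chi\varphi=\bar\chi(\chi\varphi_0)=\varphi_0$, which is inner by construction. This completes the proof. The main obstacle is the identification of $M$ as $\bar\chi H_2^-$ for an inner $\chi$ together with the recognition that the DSS freedom precisely accommodates this $\chi$; once that structure is in place, everything reduces to the algebraic identity $\bar\chi(\chi\varphi_0)=\varphi_0$ and careful book-keeping of shifts between $H_2^\perp$, $H_2^-$, and the two spans starting at $k=0$ versus $k=1$.
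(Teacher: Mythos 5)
Your argument is correct, and it takes a genuinely different route from the paper's. The paper never invokes the Douglas--Shapiro--Shields quotient identity inside this proof: it starts from the invariant-subspace form $\overline{\Span}_{k>0}U^{*k}h=(\varphi H_2)^{\perp}$ of non-cyclicity, twists the span of $z^{-k}h$ by $\bar\varphi z$ to land inside $H_2^-$, applies the Beurling--Lax extension there to get $\bar\psi H_2^-$, and then unwinds via the inclusion $\overline{\Span}_{k>0}z^{-k}h\psi\supset\overline{\Span}_{k>0}z^{-k}h\psi\bar\varphi$ (a step that silently requires approximating multiplication by $\bar\varphi$ within the span, e.g.\ by Ces\`aro means). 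You instead apply Beurling--Lax to $M=\overline{\Span}_{k\ge0}z^{-k}\bar h$, which sits inside $H_2^-$ for free, and let non-cyclicity enter only through the identity $\psi_0h=\varphi_0\bar h$; after the normalization $\psi=\chi\psi_0$, $\varphi=\chi\varphi_0$ the whole statement collapses to the tautology that $\bar\chi\varphi=\varphi_0$ is inner. This buys a cleaner endgame (no approximation step, only the easy direction of the inclusion criterion $q_1H_2^-\subseteq qH_2^-$) and makes transparent exactly where non-cyclicity is used; the cost is the small amount of bookkeeping you acknowledge (the reduction to $h(0)\neq0$, or alternatively the observation that a nonzero doubly-invariant subspace cannot sit inside $H_2^-$, to justify that $M$ is not $U$-invariant, and the implicit standing assumption $h\neq0$, which the paper also makes). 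Your $\psi=\chi\psi_0$ and the paper's $\psi$ need not coincide, but the lemma only asserts existence, so this is immaterial.
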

\begin{proof}
See the Appendix.
\end{proof}

\begin{proof}[Proof of Theorem \ref{thm:pcnc}]
Let the inner function $\psi$ be selected according to Lemma \ref{lm:hminus} so that $\overline{\Span}_{k> 0}(z^{-k}\psi h)\supset H_2^\perp$.
The backward prediction error for $x_0$ is bounded by
\begin{eqnarray}
\sqrt{\mE(\|x_0-\hat x_{0|{\rm future}}\|^2)}&=& \|a^*g+b^*h\|_{2}\nonumber\\
&=&\|\psi (a^*g+ b^*h)\|_{2}\nonumber\\
&\le&\|P_{H_2}(\psi a^*g)\|_{2} \label{eq:perror}
\\
&&+\|\psi b^*h +P_{H_2^\perp}(\psi a^*g)\|_{2}\nonumber
\end{eqnarray}
where $a$ and $b$ are polynomials with $a(0)=1$ and $b(0)=0$, corresponding
to the predictor
$\hat x_{0|{\rm future}}=-\sum_{\ell=1}^n(\bar a_\ell x_\ell+\bar b_\ell y_\ell)$.
The inequality in \eqref{eq:perror} follows from the triangle inequality.
Since $g$ is cyclic, so is $U^*(g\psi)$, hence $\Span_{\ell\ge 1} U^{*\ell} \psi g$ is dense in $H_2$, and therefore the first term of \eqref{eq:perror} which equals
\begin{eqnarray*}
\|P_{H_2}(\psi a^*g)\|_{2}=\|\psi g+\sum_{\ell=1}^\infty \bar a_{\ell}U^{*\ell}(\psi g)\|_2,
\end{eqnarray*}
can be made arbitrarily small by selecting the polynomial $a$ properly.  Since $\overline{\Span}_{k> 0}(z^{-k}\psi h)\supset H_2^\perp$, the polynomial
$b$ can be selected so that the second term of \eqref{eq:perror} is arbitrarily small as well.

A similar argument can be used to show that $y_0$ can be estimated with arbitrarily small error, by considering polynomials with $a(0)=0$ and $b(0)=1$ in \eqref{eq:perror}. This completes the proof.
\end{proof}



Following the same lines as in the proof of Theorem \ref{thm:pcnc}, one can in fact show that
\[
\overline{\Span}_{k> 0}\{z^{-k} g\}+\overline{\Span}_{k> 0}\{z^{-k} h\}=L_2,
\]
and therefore, the backwards prediction error is zero as a result of the Kolmogorov isomorphism. In this case the input sequence $w_k,$ for $k\in \mZ,$ may be reconstructed arbitrarily well from the future output, $x_k, y_k$ for $k> 0$.

As stated in Theorem \ref{thm:DSS} \cite{DouShaShi70}, a function $g\in H_2$ is cyclic if and only if $g/\bar g$ belong to $\mathcal{J}$, the set of unimodular functions that are quotients of inner functions,\footnote{The set $\mathcal{J}$ is dense in the set of all unimodular functions with respect to $L_2$-norm. See \cite{DouRud69} for more discussion on $\mathcal{J}$.} i.e.,  $\mathcal{J}=\{\varphi/\psi\,:\, \varphi, \psi \mbox{ inner}\}$.
This result is central to our characterization of backward deterministic rank-one processes, and
 leads to our main result.

\begin{thm}\label{thm:2GenDet}
Let $g,h\in H_2$, then the following conditions are equivalent
\begin{itemize}
\setlength{\itemsep}{1pt}
\setlength{\parskip}{1pt}
\item[{(a)}] The system \eqref{eq:SP} is backward deterministic,
\item[{(b)}] $
\overline{\Span}_{k\ge 0}\{z^{-k} g\}+\overline{\Span}_{k\ge 0}\{z^{-k} h\}=L_2$,
\item[{(c)}]$
g\bar h/(\bar g h)\notin \mathcal{J}.$
\end{itemize}
\end{thm}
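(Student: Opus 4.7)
I would split the proof into the two halves $(a)\Leftrightarrow(b)$ and $(b)\Leftrightarrow(c)$. The first half is operator-theoretic, based on the Kolmogorov isomorphism and the classification of doubly invariant subspaces of $L_2$. The second half is computational, based on a direct description of the orthogonal complements of $N_g:=\overline{\Span}_{k\ge 0}\{z^{-k}g\}$ and $N_h:=\overline{\Span}_{k\ge 0}\{z^{-k}h\}$, together with an inner-outer factorization of the unimodular function $q=g\bar h/(\bar g h)$. Throughout I tacitly assume $g,h\neq 0$.

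For $(a)\Leftrightarrow(b)$, set
\[
P=\overline{\Span}_{k\ge 0}\{z^{-k}g,z^{-k}h\},\qquad F=\overline{\Span}_{k\ge 1}\{z^{-k}g,z^{-k}h\}.
\]
Under the Kolmogorov isomorphism $w_k\mapsto z^{-k}$, backward determinism is the condition $g,h\in F$, equivalently $F=P$. Since $U^{-1}$ is unitary on $L_2$ and $F=U^{-1}P$, and since $P$ is automatically $U^{-1}$-invariant, the equality $F=P$ is equivalent to $P$ being invariant under $U$ as well, i.e.\ doubly invariant. By Wiener's theorem any such subspace of $L_2$ has the form $\chi_E L_2$; but $g\in P\cap H_2\setminus\{0\}$ is nonzero almost everywhere on $\mathbb T$ (F.\ \& M.\ Riesz), so $E=\mathbb T$ a.e.\ and $P=L_2$. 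Conversely, if $P=L_2$ then $F=U^{-1}L_2=L_2=P$, giving (a).

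For $(b)\Leftrightarrow(c)$, condition (b) is equivalent to $N_g^\perp\cap N_h^\perp=\{0\}$. A short Fourier computation shows
\[
N_g^\perp=\{f\in L_2: f\bar g\in zH_2\},
\]
and likewise for $N_h^\perp$. If $0\ne f$ lies in the intersection, set $\phi=f\bar g/z\in H_2\setminus\{0\}$ and $\psi=f\bar h/z\in H_2\setminus\{0\}$; dividing and rearranging yields
\[
q=\frac{\psi g}{\phi h}.
\]
Factor $\phi h=I_1 O_1$ and $\psi g=I_2 O_2$ into inner and outer parts in $H_1$. Unimodularity of $q$ forces $|O_1|=|O_2|$ on $\mathbb T$, hence $O_1=cO_2$ for a unimodular constant $c$, and so $q=c\,I_2/I_1\in\mathcal J$, contradicting (c). Conversely, given $q=\varphi/\chi$ with $\varphi,\chi$ inner, take $f=z\chi g/\bar g$. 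Then $|f|=1$ a.e., so $f\in L_2\setminus\{0\}$, and one verifies $f\bar g=z\chi g\in zH_2$ and $f\bar h=z\chi\,(g\bar h/\bar g)=z\chi(qh)=z\varphi h\in zH_2$, producing a nontrivial element of $N_g^\perp\cap N_h^\perp$ and so violating (b).

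\textbf{Main obstacle.} The pivotal step is the inner-outer argument in the direction $(b)\Rightarrow(c)$: passing from the $H_1$-level identity $\psi g=q\cdot\phi h$, together with $|q|=1$, to the expression of $q$ as a ratio of inner functions. This relies on the classical rigidity of outer functions, namely that two outer functions with equal modulus on $\mathbb T$ differ only by a unimodular constant, so that the entire discrepancy between $\psi g$ and $\phi h$ is carried by their inner factors. The remaining steps are either routine Fourier-analytic identifications or explicit constructions parallel to those used in Section~\ref{sec:nonreverse}.
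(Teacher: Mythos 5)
Your proof is correct, but it reaches the result by a genuinely different route than the paper, and in one place it is actually more complete. For $(a)\Leftrightarrow(b)$ the paper simply invokes the Kolmogorov isomorphism; you supply the missing step, namely that one-step backward predictability forces $P=\overline{\Span}_{k\ge0}\{z^{-k}g,z^{-k}h\}$ to be \emph{doubly} invariant, whence $P=\chi_E L_2$ by Wiener's theorem and $E=\mT$ because a nonzero $H_2$ function cannot vanish on a set of positive measure. For $(b)\Leftrightarrow(c)$ the paper's mechanism is structural: it first identifies each single-generator span as a simply invariant subspace $\overline{\Span}_{k\ge0}\{z^{-k}g\}=qH_2^-$ with $q=g/\bar g_{\rm outer}$ (Beurling--Lax--Helson), and then decides when two such subspaces sum to $L_2$ by comparing their unimodular symbols inside the lattice of invariant subspaces. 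You instead pass to annihilators, extract from a common nonzero annihilator $f$ the identity $q=\psi g/(\phi h)$, and conclude via the rigidity of outer functions; in the converse direction you exhibit the explicit witness $f=z\chi g/\bar g$. The two arguments hinge on the same cancellation (outer parts drop out, inner parts produce the $\mathcal{J}$-representation), but yours avoids the invariant-subspace classification in this half and is more hands-on, while the paper's version yields the explicit symbols $q_1,q_2$ that it reuses in Corollary~\ref{cor:multi}. Two minor inaccuracies that do not affect validity: since $f\in L_2$ and $g\in H_2$, the product $f\bar g$ lies in $zH_1$ (by F.~and M.~Riesz), not $zH_2$, and consequently $\psi g$, $\phi h$ live in $H_{2/3}$ (or the Smirnov class $N^+$) rather than $H_1$ --- harmless, because inner--outer factorization and the equal-modulus rigidity of outer functions hold in every $H_p$, $p>0$. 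Finally, like the paper, you silently replace the algebraic sum in (b) by its closure (your criterion $N_g^\perp\cap N_h^\perp=\{0\}$ characterizes density of the sum); this is the operationally correct reading of ``deterministic'' and is the same convention the paper's own Lemma~\ref{lm:subspaces} relies on.
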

\begin{proof}
See the Appendix.
\end{proof}
Note that Theorem \ref{thm:pcnc} follows as a special case, from the equivalence between (a) and (c) and by using the fact that $g/\bar g \notin \mathcal{J}$ and $ h/\bar h \in \mathcal{J}$ (see Theorem \ref{thm:DSS}).

In view of Theorem \ref{thm:2GenDet}, we define backward deterministic processes generated by a set of functions as follows.
\begin{defn}
The functions $g^{(1)},g^{(2)}, \ldots, g^{(n)}\in H_2$ are called backward deterministic if
\begin{equation}
\sum_{j=1}^n \overline{\Span}_{k\ge 0}\{z^{-k} g^{(j)}\}=L_2.
\end{equation}
\end{defn}

As a corollary to Theorem \ref{thm:2GenDet} we also get an analogous result for general  vector-valued rank-one processes.
\begin{cor}\label{cor:multi}
The non-zero functions $g^{(1)},g^{(2)}, \ldots, g^{(n)}\in H_2$ are backward deterministic if and only if
\begin{equation}\label{eq:gh2}
\frac{g^{(1)}}{\bar g^{(1)}}\frac{\bar g^{(j)}}{ g^{(j)}}\notin \mathcal{J}
\end{equation}
for some $j=2,\ldots, n$.
\end{cor}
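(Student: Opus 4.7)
My plan splits into the two implications. For the forward direction ($\Leftarrow$), a direct appeal to Theorem \ref{thm:2GenDet} suffices: if (\ref{eq:gh2}) holds for some $j\ge 2$, then the pair $(g^{(1)},g^{(j)})$ already satisfies condition~(c) of that theorem, so
\[
\overline{\Span}_{k\ge 0}\{z^{-k}g^{(1)}\}+\overline{\Span}_{k\ge 0}\{z^{-k}g^{(j)}\}=L_2,
\]
and adjoining the remaining spans still yields $L_2$.

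For the converse ($\Rightarrow$) I would argue by contraposition. Assume $u_1/u_j\in\mathcal{J}$ for every $j\ge 2$, where $u_\ell:=g^{(\ell)}/\bar g^{(\ell)}$, and fix inner functions $\varphi_j,\psi_j$ with $u_1/u_j=\varphi_j/\psi_j$. Writing $M_j:=\overline{\Span}_{k\ge 0}\{z^{-k}g^{(j)}\}$, a short Fourier-coefficient computation shows that $f\perp M_j$ is equivalent to $f\bar g^{(j)}\in zH_2$, so the task reduces to exhibiting a single nonzero $f\in L_2$ for which this holds simultaneously in every $j$.

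The candidate I would try is the unimodular function $f:=u_1\,\Psi\,G$, where $\Psi:=\prod_{j=2}^n\psi_j$ is inner (being a finite product of inner functions) and $G\in zH_\infty$ is any nonzero bounded analytic function vanishing at the origin, e.g.\ $G(z)=z$. Because $|u_1|=|\Psi|=|G|=1$ a.e.\ on $\mT$, this $f$ sits automatically in $L_\infty\setminus\{0\}\subset L_2$. The check for $j=1$ is immediate: $f\bar g^{(1)}=g^{(1)}\Psi G$ is a product of an $H_2$ and two $H_\infty$ factors sharing a zero at the origin. For $j\ge 2$, the identity $u_1\psi_j=u_j\varphi_j$ rearranges to
\[
f\bar g^{(j)}=g^{(j)}\,\varphi_j\Big(\prod_{i\ne j}\psi_i\Big)G,
\]
which again lies in $zH_2$, so $\sum_jM_j\neq L_2$ and the process fails to be backward deterministic.

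The only real design step, and the main obstacle to watch, is to balance two competing roles in the candidate $f$: the factor $u_1$ must convert the obstruction $\bar g^{(j)}$ into the analytic combination $g^{(j)}\varphi_j/\psi_j$ via the hypothesis, while the multiplier $\Psi$ must clear every inner denominator $\psi_j$ uniformly in $j$. Once this balance is struck, the remainder is pure manipulation of inner and unimodular factors.
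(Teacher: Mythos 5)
Your proof is correct. The forward implication is exactly the paper's (a quote of Theorem~\ref{thm:2GenDet}(b)$\Leftrightarrow$(c) applied to the pair $(g^{(1)},g^{(j)})$). For the converse, your construction is in substance the same as the paper's but packaged dually: the paper invokes Lemmas~\ref{lm:qchar} and~\ref{lm:subspace} to show that every $q_iH_2^-$ is contained in the single proper $U^{-1}$-invariant subspace $qH_2^-$ with $q=q_1 g^{(1)}_{\rm inner}\prod_{j\ge 2}\psi_j$, whereas you exhibit an explicit non-zero function orthogonal to every span. In fact, since on $\mT$ one has $u_1=g^{(1)}/\bar g^{(1)}=q_1 g^{(1)}_{\rm inner}$, your witness $f=u_1\Psi z$ is precisely $qz$, the very element the paper points to when noting $qH_2^-\neq L_2$; so your argument is a self-contained, bare-hands verification that bypasses the invariant-subspace machinery, at the cost of redoing the Fourier-coefficient computation that Lemma~\ref{lm:qchar} encapsulates. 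Two small points of hygiene: your equivalence ``$f\perp M_j$ iff $f\bar g^{(j)}\in zH_2$'' should, for general $f\in L_2$, read $f\bar g^{(j)}\in zH_1$ (the product is only in $L_1$), though this is moot for your unimodular $f$; and the phrase ``sharing a zero at the origin'' is loose---only the factor $G$ vanishes at $0$, which is all that is needed.
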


\begin{proof}
See the Appendix.
\end{proof}

\section{Concluding remarks}

While the moving-average process in Section \ref{sec:MA} admits a spectral factorization for the backward process \eqref{eq:SpecFac}, there is no such factorization for the non-reversible processes in Section \ref{sec:nonreverse} and Section \ref{sec:CharRank1}.
This may be viewed in the context of decompositions for stochastic processes that are Gaussian, zero mean, and stationary. Namely, the Hilbert space generated by any such process may be decomposed as
\[
{\bf H}(\xi_k)={\bf H}_{-\infty}(\xi_k)\, \oplus {\bf H}(w_k),
\]
in terms of the remote past and the driving noise, namely,
\begin{eqnarray*}
{\bf H}(\xi_k)&=&\overline{\Span}_{k\in \mZ} \{\xi_k\}\\
{\bf H}_{-\infty}(\xi_k) &=&\cap_{t\in \mZ}\overline{\Span}_{k\le t} \{\xi_k\},\;\mbox{ and }\\
{\bf H}(w_k)&=&\overline{\Span}_{k\in \mZ} \{w_k\},
\end{eqnarray*}
and similarly in terms of the remote future and the Hilbert space generated in the backward direction by a driving noise
\[
{\bf H}(\xi_k)={\bf H}_{+\infty}(\xi_k)\, \oplus {\bf H}(\bar w_k)
\]
(see \cite[Section 4.5]{LinPic15} for details). The process is reversible if
the remote past and the remote future coincide.
Here we have considered concrete examples of a non-reversible processes where the remote past is trivial while the remote future spans the entire process.

The essence of these examples (Section \ref{sec:nonreverse}-\ref{sec:CharRank1} and, cf.\ \cite{LinPic15}) is that the power spectrum of $\{\xi_k\}$, being
\[
\left(\begin{array}{cc}g(z)\\ h(z)\end{array}\right)\left(\begin{array}{cc}g(z)^* & h(z)^*\end{array}\right)
\]
fails to have a co-analytic spectral factorization, or equivalently, the backward process is not regular \cite{Roz60}. This can be shown using Theorem \ref{thm:DSS} (see also \cite{LinPic15}). It also fails to satisfy condition $3$ of Theorem $2$ in \cite{Roz60}. This absence of co-analytic factorization renders the backward process deterministic.

It is quite apparent that the issues herein are quite technical in nature. Yet, they impact in significant ways the relevance of certain models for time-series. Indeed, existence of left and right analytic factorizations for the corresponding power spectra may fail and, in such cases, the dichotomy between past and future becomes central. On the other hand, when the power spectrum is coersive (nonsingular at every frequency), it admits both left and right analytic spectral factors and the issue become mute. But even so, the limiting case where the power spectrum seizes to be coersive and the dichotomy appears, requires further understanding from a practical standpoint. In particular, it is of interest to understand the interplay between the scales and window lengths required to estimate the present from past and future values, respectively. Deeper insights on how prediction and postdiction in stochastic models relate to time directionality, causality, ergodicity and even mixing may also result in further exploration of these issues.

\section{Acknowledgment}

Insights on the subject by Anders Lindquist, Alexandre Megretski and Sergei Treil are gratefully acknowledged.

\appendix

\subsection{Proof of Lemma \ref{lm:hminus}}
Since $h$ is non-cyclic there exists an inner function $\varphi$ such that $\Span_{k> 0} U^{*k}h= (\varphi H_2)^\perp_{H_2}$, hence
\begin{eqnarray*}
\overline{\Span}_{k> 0}\;z^{-k}h&\subset& (\varphi H_2)^\perp_{L_2}\\
\overline{\Span}_{k> 0}\;z^{-k}h\bar \varphi z&\subset&  H_2^-.
\end{eqnarray*}
Since the left hand side is invariant with respect to $z^{-1}$ it is on the form $\bar \psi H_2^-$ where $\psi$ is inner.
From Beurling-Lax theorem \cite{Hof07,Hel13} it follows that
\begin{eqnarray*}
\overline{\Span}_{k> 0}\;z^{-k}h\bar \varphi z&=&  \bar \psi H_2^-\\
\overline{\Span}_{k> 0}\;z^{-k}h\psi &\supset &\overline{\Span}_{k> 0}\;z^{-k}h\psi \bar \varphi=  H_2^\perp.
\end{eqnarray*}

\subsection{Proof of main theorem (Theorem \ref{thm:2GenDet})}

In order to prove the main theorem we will use the following lemmas.

\begin{lemma}\label{lm:qchar}
For any function $g\in H_2$, the subspace $\overline{\Span}_{k\ge 0}\{z^{-k} g\}$ is equal to $qH_2^-$, where $q=g/\bar g_{\rm outer}$ and $g_{\rm outer}$ is the outer part of $g$.
\end{lemma}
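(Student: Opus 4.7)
The plan is to reduce to the case where $g$ is outer by absorbing the inner factor into $q$, and then to identify the resulting subspace by complex conjugation. Write $g=\varphi g_{\rm outer}$ for the inner-outer factorization of $g$. Then
\[
q \;=\; \frac{g}{\bar g_{\rm outer}} \;=\; \varphi\cdot\frac{g_{\rm outer}}{\bar g_{\rm outer}}
\]
is manifestly unimodular on $\mT$, since $|\varphi|=1$ and $|g_{\rm outer}|=|\bar g_{\rm outer}|$ almost everywhere on $\mT$. Next I would use the elementary fact that multiplication by a bounded unimodular function is a unitary on $L_2(\mT)$, and in particular preserves closures; together with the identity $z^{-k}g = q\,(z^{-k}\bar g_{\rm outer})$, valid for every $k\ge 0$, this yields
\[
\overline{\Span}_{k\ge 0}\{z^{-k} g\} \;=\; q\cdot\overline{\Span}_{k\ge 0}\{z^{-k} \bar g_{\rm outer}\}.
\]
The lemma is thereby reduced to showing $\overline{\Span}_{k\ge 0}\{z^{-k} \bar g_{\rm outer}\}=H_2^-$.

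This is the only substantive step, and I would handle it by pointwise complex conjugation on $\mT$, which is an anti-linear isometry of $L_2$ mapping $H_2^-$ onto $H_2$ and sending $z^{-k}\bar g_{\rm outer}$ to $z^{k} g_{\rm outer}$. The required identity therefore becomes
\[
\overline{\Span}_{k\ge 0}\{z^{k} g_{\rm outer}\} \;=\; H_2,
\]
which is exactly the statement that the outer function $g_{\rm outer}$ is cyclic for the forward shift $U$ on $H_2$. This is the standard characterization of outer functions recalled in Section~\ref{sec:prel}. Combining the two steps gives $\overline{\Span}_{k\ge 0}\{z^{-k} g\}=qH_2^-$, as claimed.

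The main obstacle is essentially bookkeeping: one has to check that the unimodular factor $q$ may legitimately be pulled out of the closure, and one must keep straight the distinction between $U$ and $U^*$, and between $H_2$ and $H_2^-$. Once the reduction to the outer case is made and conjugation is used to swap the $U^{-1}$-invariant problem for a $U$-invariant one, the conclusion falls out of the cyclicity of outer functions. A parallel route, which I would mention as a consistency check, is to invoke the extension of the Beurling--Lax theorem quoted as the Proposition in Section~\ref{sec:prel}: observe that $M_0:=\overline{\Span}_{k\ge 0}\{z^{-k} g\}$ is $U^{-1}$-invariant in $L_2$ but not $U$-invariant, so $M_0=q'H_2^-$ for some unimodular $q'$, and then identify $q'$ with $q$ (up to a constant of modulus one) by noting that $g=q\,\bar g_{\rm outer}\in qH_2^-$ and appealing to uniqueness of the generator.
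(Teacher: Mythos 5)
Your proof is correct, and it takes a genuinely different route from the paper's. The paper first invokes the $L_2$ extension of the Beurling--Lax theorem (the Proposition in Section~\ref{sec:prel}) to conclude abstractly that $M=\overline{\Span}_{k\ge 0}\{z^{-k}g\}$ has the form $q'H_2^-$ for some unimodular $q'$, and then identifies the generator by the explicit orthogonality computation $(q,z^{-k}g)=(g_{\rm outer},z^{-k})=0$ for $k\ge 1$. You instead produce $q$ constructively: you factor $z^{-k}g=q\,(z^{-k}\bar g_{\rm outer})$, pull the unimodular multiplier out of the closed span (legitimate, since multiplication by $q$ is unitary on $L_2$), and reduce the remaining identity $\overline{\Span}_{k\ge0}\{z^{-k}\bar g_{\rm outer}\}=H_2^-$ by complex conjugation to the cyclicity of outer functions for the forward shift on $H_2$. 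Each approach rests on Beurling's theorem in some guise, but yours has the advantage of bypassing the step where one must check that $M$ is simply invariant (i.e., $U^{-1}$-invariant but not $U$-invariant), an assertion the paper makes without justification; it also never needs the informal claim that every element of $M$ is of the form $g\bar f$ with $f$ analytic. The paper's route, on the other hand, is the one that scales to the wandering-subspace arguments reused in Lemmas~\ref{lm:subspace} and~\ref{lm:subspaces}. Two minor bookkeeping points, neither of which is a gap: the statement implicitly assumes $g\not\equiv 0$ (otherwise $g_{\rm outer}$ is undefined), and your concluding ``consistency check'' via uniqueness of the Beurling--Lax generator would, if promoted to a proof, still require an argument that the inner function relating $q$ and $q'$ is a constant; as a side remark it is fine.
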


\begin{proof}
Clearly $M=\overline{\Span}_{k\ge 0}\{z^{-k} g\}\subset L_2$ is a invariant subspace for $U^{-1}$ while not for $U$, so it has the form $M=qH_2^-$ for some unimodular function $q$ and $M=q\oplus z^{-1}M$.  The function $q$ is determined by the subspace up to a constant factor. We next compute one such $q$. Since $q\in M$, we have that $q=g\bar{f}$ for some analytic function $f$. We claim that one feasible $f$ is given by
 \[
        f=1/g_{\rm outer},
    \]
where $g_{\rm outer}$ is the outer factor of $g$. Note $q=g\bar{f}=g/\bar{g}_{\rm outer}$ is indeed a unimodular function. To see $M=qH_2^{-}$, it is enough to show $q\perp z^{-1}M$, which is equivalent to $q\perp z^{-k}g$ for all $k\ge 1$. This follows from
    \[
        (q,z^{-k}g)=(g\bar{g}/\bar{g}_{\rm outer},z^{-k})=(g_{\rm outer},z^{-k})=0,~k\ge 1,
    \]
which completes the proof.
\end{proof}

\begin{lemma}
A function $g\in H_2$ is non-cyclic if and only if $q\in \mathcal{J}$, where $q$ is a unimodular function satisfying $qH_2^-=\overline{\Span}_{k\ge 0}\{z^{-k} g\}$.
\end{lemma}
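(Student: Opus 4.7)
The plan is to combine Lemma~\ref{lm:qchar} with the Douglas--Shapiro--Shields characterization (Theorem~\ref{thm:DSS}) and the fact, cited in Section~\ref{sec:prel}, that $g\in H_2$ is $U^*$-cyclic if and only if its outer factor is. Write the inner-outer factorization $g=g_{\rm inner}g_{\rm outer}$. By Lemma~\ref{lm:qchar} we may take
\[
q \;=\; \frac{g}{\bar g_{\rm outer}} \;=\; g_{\rm inner}\cdot\frac{g_{\rm outer}}{\bar g_{\rm outer}},
\]
so the question reduces to whether the product on the right lies in $\mathcal{J}$.

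The key algebraic observation is that $\mathcal{J}=\{\varphi/\psi:\varphi,\psi\text{ inner}\}$ is a multiplicative group: it is closed under products since $(\varphi_1/\psi_1)(\varphi_2/\psi_2)=(\varphi_1\varphi_2)/(\psi_1\psi_2)$, and every inner function $\varphi$ lies in $\mathcal{J}$ (as $\varphi/1$), with inverse $1/\varphi\in\mathcal{J}$. In particular $g_{\rm inner}\in\mathcal{J}$, and multiplying or dividing by $g_{\rm inner}$ preserves membership in $\mathcal{J}$. Therefore
\[
q\in\mathcal{J} \;\Longleftrightarrow\; \frac{g_{\rm outer}}{\bar g_{\rm outer}}\in\mathcal{J}.
\]

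Now invoke Theorem~\ref{thm:DSS} applied to the outer function $g_{\rm outer}$: the right-hand condition is equivalent to $g_{\rm outer}$ being non-cyclic for $U^*$. Combining this with the property that cyclicity of $g$ is equivalent to cyclicity of its outer factor yields $q\in\mathcal{J}$ if and only if $g$ is non-cyclic, which is what we wanted. (The indeterminacy of $q$ up to a unimodular constant is harmless, since constants of modulus one are inner and hence lie in $\mathcal{J}$.)

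I do not anticipate a real obstacle here; the only mildly delicate point is making sure the boundary-value manipulations are justified, i.e. that on $\mT$ one has $\bar g_{\rm inner}=1/g_{\rm inner}$ a.e.\ so that dividing by $g_{\rm inner}$ really corresponds to multiplying by the inner function $\bar g_{\rm inner}|_\mT$, and that Lemma~\ref{lm:qchar} produces a unimodular (not merely bounded) $q$ so that ``$q\in\mathcal{J}$'' is meaningful. Both facts are standard and were implicitly used in the preceding lemma.
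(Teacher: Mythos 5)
Your proof is correct and follows essentially the same route as the paper's: both rest on the formula $q=\lambda g/\bar g_{\rm outer}$ from Lemma~\ref{lm:qchar} together with the Douglas--Shapiro--Shields criterion, differing only in that you apply Theorem~\ref{thm:DSS} to $g_{\rm outer}$ (using that cyclicity depends only on the outer factor and that $\mathcal{J}$ is closed under multiplication and division by inner functions) whereas the paper applies it to $g$ directly and absorbs $g_{\rm inner}$ into the denominator. The two arguments are interchangeable and equally rigorous.
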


\begin{proof}
Lemma \ref{lm:qchar} implies that the unimodular function $q$ is on the form
    \[
        q=\lambda\frac{g}{\bar{g}_{\rm outer}}
    \]
for some constant $|\lambda|=1$.
A function $g\in H_2$ is non-cyclic if and only if there exists a pair of inner functions $\varphi$ and $\psi$ such that
    \[
        \frac{g}{\bar{g}}=\frac{\varphi}{\psi}~~~\text{almost everywhere on $\mT$}.
    \]
By combining these two facts, it follows that if $g\in H_2$ is non-cyclic, then
    \[
        q=\frac{\lambda g}{\bar{g}g_{\rm inner}}=\frac{\lambda\varphi}{\psi g_{\rm inner}}\in \mathcal{J}.
    \]
Conversely, if $q\in \mathcal{J}$, then
    \[
        \frac{g}{\bar{g}}=\bar\lambda q g_{\rm inner}=\frac{\varphi}{\psi}
    \]
for some inner functions $\varphi,\psi$,
and hence $g$ is non-cyclic.
\end{proof}

\begin{lemma}\label{lm:subspace}
Let $q_1$ and $q$ be unimodular functions, then $q_1H_2^-\subset qH_2^-$ if and only if $\varphi q_1=q$ for some inner function $\varphi$.
\end{lemma}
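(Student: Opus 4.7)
The plan is to reduce the subspace inclusion to a pointwise condition on the unimodular quotient $q\bar q_1$, and then recognize that quotient as an inner function using the identification $H_2^- = \overline{H_2}$ together with the standard characterization of inner functions as the unimodular members of $H_\infty$.

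For the ``if'' direction, suppose $\varphi q_1 = q$ with $\varphi$ inner. Any element of $q_1 H_2^-$ can be written as $q \bar\varphi h$ with $h \in H_2^-$, so the claim reduces to $\bar\varphi H_2^- \subset H_2^-$. Taking complex conjugates this is equivalent to $\varphi H_2 \subset H_2$, which is immediate since $\varphi \in H_\infty \subset H_2$ and $H_2$ is a module over $H_\infty$.

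For the ``only if'' direction, suppose $q_1 H_2^- \subset q H_2^-$. Multiplying through by the unimodular function $\bar q$ yields $\bar q q_1 H_2^- \subset H_2^-$. Since $H_2^- = z H_2^\perp$ contains the constant function $1$, evaluating the inclusion at $h = 1$ forces $\bar q q_1 \in H_2^-$, i.e., $q \bar q_1 \in H_2$. But $q\bar q_1$ is also unimodular on $\mT$, so it is a bounded analytic function of modulus one a.e., hence an inner function; call it $\varphi$. Then $\varphi q_1 = q\bar q_1 q_1 = q$ by unimodularity of $q_1$.

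The only subtle step is the observation that $1 \in H_2^-$, which is what converts a bulk subspace containment into a statement about a single function; everything else is a routine manipulation via complex conjugation. No auxiliary result beyond the defining property $H_2^- = zH_2^\perp$ and the fact that inner functions are precisely the unimodular elements of $H_2$ is needed.
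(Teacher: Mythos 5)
Your proof is correct and follows essentially the same route as the paper's: the sufficiency is the same direct computation, and the necessity in both cases hinges on the fact that $1\in H_2^-$, which reduces the inclusion to the single membership $q\bar q_1\in H_2$ and hence to $q\bar q_1$ being inner. No issues.
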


\begin{proof}
The sufficiency follows from the fact that any $g\in q_1H_2^-$ is on the form $q_1 \bar f$ for some $f\in H_2$, and hence satisfies $g=q \bar f\bar\varphi\in qH_2^-$. To see the necessity, we note that $q_1H_2^-\subset qH_2^-$ implies $q_1\in qH_2^-$. It follows that $q_1=q\bar\varphi$ for some unimodular $\varphi\in H_2$, that is, $\varphi$ is a inner function. This completes the proof.
\end{proof}

\begin{lemma} \label{lm:subspaces}
Let $q_1H_2^{-}$ and $q_2H_2^{-}$ be subspaces of $L_2$ where $q_1$ and $q_2$ are unimodular, then $q_1H_2^{-}+q_2H_2^{-}=L_2$ holds if and only if $q_1/q_2\notin \mathcal{J}$.
\end{lemma}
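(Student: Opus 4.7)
The plan is to dualize the sum-equals-$L_2$ condition into an intersection condition on shift-invariant subspaces, and then to extract a ratio-of-inner-functions representation from any nonzero common element.

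First I would compute the orthogonal complement. Since $L_2 = H_2^-\oplus zH_2$ (the decomposition into non-positive and strictly positive Fourier modes), we have $(H_2^-)^{\perp_{L_2}} = zH_2$. Multiplication by a unimodular function is unitary on $L_2$, so $(q_iH_2^-)^\perp = q_izH_2$ for $i=1,2$. Taking orthogonal complements of the sum, and using that multiplication by $z$ is a bijection on $L_2$, one obtains
\begin{equation*}
\overline{q_1H_2^- + q_2H_2^-} = L_2 \iff q_1H_2 \cap q_2H_2 = \{0\}.
\end{equation*}
It therefore suffices to prove that $q_1H_2\cap q_2H_2\neq\{0\}$ if and only if $q_1/q_2\in\mathcal{J}$.

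For the ``if'' direction, if $q_1/q_2 = \varphi/\psi$ with $\varphi,\psi$ inner, then $\psi\in H_2$ and $\varphi\in H_2$, and the identity $q_1\psi = q_2\varphi$ exhibits a nonzero element in the intersection (nonzero because inner functions are unimodular a.e.\ on $\mT$).

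For the ``only if'' direction, assume there exists a nonzero $f = q_1f_1 = q_2f_2$ with $f_1,f_2\in H_2$; both $f_i$ must be nonzero since the $q_i$ are unimodular a.e.\ on $\mT$. Then $q_1/q_2 = f_2/f_1$ as meromorphic functions on the disc. Using the inner-outer factorizations $f_i = u_io_i$ I would write
\begin{equation*}
\frac{q_1}{q_2} = \frac{u_2}{u_1}\cdot\frac{o_2}{o_1},
\end{equation*}
and observe that $|q_1/q_2|=1$ and $|u_i|=1$ a.e.\ on $\mT$ force $|o_1|=|o_2|$ a.e. The classical fact that an outer $H_2$ function is determined by its modulus up to a unimodular constant then gives $o_2 = \lambda o_1$ with $|\lambda|=1$, so $q_1/q_2 = \lambda u_2/u_1\in\mathcal{J}$ (absorbing the constant into one of the inner factors).

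The main subtlety — and the step I would be most careful about — is invoking the outer-uniqueness fact cleanly; one can either cite it directly or derive it by noting that $o_2/o_1$ is an outer function whose modulus is $1$ a.e., hence constant. The orthogonal-complement calculation is routine once one keeps straight that $(H_2^-)^\perp = zH_2$ rather than $H_2$, and that unimodular multiplication preserves closedness and orthogonality.
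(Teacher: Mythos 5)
Your proof is correct, but it takes a genuinely different route from the paper's. The paper stays on the ``backward'' side: it invokes the Beurling--Lax-type Proposition to identify the (closure of the) sum $q_1H_2^-+q_2H_2^-$, when proper, with $qH_2^-$ for a single unimodular $q$, and then reads off $q=q_1\psi_1=q_2\psi_2$ from its Lemma on containments $q_iH_2^-\subset qH_2^-$. You instead dualize: since $\bigl(\overline{A+B}\bigr)^\perp=A^\perp\cap B^\perp$ and $(q_iH_2^-)^\perp=q_izH_2$, the condition becomes $q_1H_2\cap q_2H_2=\{0\}$, and a nonzero common element $q_1f_1=q_2f_2$ is dissected by inner--outer factorization together with the fact that an outer function is determined by its boundary modulus up to a unimodular constant (and that a nonzero $H_2$ function is nonzero a.e.\ on $\mT$, which you need in order to divide by $f_1$). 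Your argument is more elementary and self-contained --- it needs no classification of $U^{-1}$-invariant subspaces --- and it handles the closure question transparently, whereas the paper's converse implicitly identifies the algebraic sum with its closure before applying the Proposition; on the other hand, the paper's route produces the smallest $qH_2^-$ containing the sum, a structural fact that is reused in the proof of its Corollary on $n$ generators, so your approach would require a separate (though easy) induction to recover that. Just make sure the lemma is read with the sum interpreted as its closure, which is the interpretation forced by condition (b) of the main theorem; you do this implicitly when you write $\overline{q_1H_2^-+q_2H_2^-}=L_2$.
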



\begin{proof}
We use proof by contradiction. Assume first that $q_1/q_2\in \mathcal{J}$, i.e., there exist inner functions $\psi_1, \psi_2$ such that $q_1/q_2=\psi_2/\psi_1$. Now, let $q$ be the unimodular function $q=q_1\psi_1=q_2\psi_2$. Then by Lemma \ref{lm:subspace} we have  $q_jH_2^-\subset qH_2^-$ for $j=1,2$, and by linearity it follows that
\begin{equation*}
q_1H_2^-+q_2H_2^-\subset qH_2^-\neq L_2.
\end{equation*}
Note that $qH_2^-\neq L_2$ holds since, e.g.,  $qz\notin qH_2^-$.

Conversely, assume that $q_1H_2^-+q_2H_2^- \neq L_2$, then $q_1H_2^-+q_2H_2^-\subsetneq L_2$ is an invariant subspace for $U^{-1}$ while not for $U$ (this follows since it contains an analytic function \cite{Hel13}). As a consequence of this there is an unimodular function $q$ such that $q_1H_2^-+q_2H_2^-=qH_2^-$. This implies that
$q_jH_2^-\subset qH_2^-$, for $j=1,2$. By Lemma \ref{lm:subspace} there exists inner functions $\psi_1, \psi_2$ such that $q=q_1\psi_1=q_2\psi_2$, and hence $q_1/q_2\in \mathcal{J}$.
\end{proof}

\begin{proof}[Proof of Theorem \ref{thm:2GenDet}]
The equivalence between (a) and (b) follows directly from the Kolmogorov isomorphism.
Using Lemma \ref{lm:qchar}, it follows that (b) is equivalent to
\begin{equation}\label{eq:q1q2}
q_1H_2^-+q_2H_2^-=L_2,
\end{equation}
where $q_1=g/\bar g_{\rm outer}$ and $q_2=h/\bar h_{\rm outer}$.
By Lemma \ref{lm:subspaces}, Equation \eqref{eq:q1q2} holds if and only if
$q_1/q_2\notin \mathcal{J}$.
Since $q_1/q_2=g\bar hh_{\rm inner}/(\bar ghg_{\rm inner})$, where $g_{\rm inner}, h_{\rm inner}$ are the inner parts of $g$ and $h$ respectively, the equivalence with
Theorem \ref{thm:2GenDet} (c) follows.
\end{proof}

\subsection{Proof of Corollary \ref{cor:multi}}
If ${g^{(1)}}{\bar g^{(j)}}/({ g^{(j)}}{\bar g^{(1)}})\notin \mathcal{J}$ for some $j=2,\ldots, n$, then from Theorem \ref{thm:2GenDet} we know $\overline{\Span}_{k> 0}\{z^{-k} g\}+\overline{\Span}_{k> 0}\{z^{-k} h\}=L_2$, which implies that $\sum_{j=1}^n \overline{\Span}_{k\ge 0}\{z^{-k} g^{(j)}\}=L_2$.

Conversely, suppose ${g^{(1)}}{\bar g^{(j)}}/({ g^{(j)}}{\bar g^{(1)}})\in \mathcal{J}$ for all $j=2,\ldots, n$, then there exists inner functions $\varphi_j, \psi_j$ for each $j$ such that
	\[
		\frac{g^{(1)}}{\bar g^{(1)}}\frac{\bar g^{(j)}}{ g^{(j)}}=\frac{\varphi_j}{\psi_j},
	\]
which is equivalent to
	\[
		\frac{q_1}{q_j}=\frac{\varphi_j{g}_{\rm inner}^{(j)}}{\psi_j{g}_{\rm inner}^{(1)}}, ~~j=2,\ldots, n
	\]
by Lemma \ref{lm:qchar}. Here $q_i$ are unimodular functions such that $\overline{\Span}_{k\ge 0}\{z^{-k} g^{(i)}\}=q_iH_2^-$ for all $i=1,\ldots, n$. Now let $q=q_1{g}_{\rm inner}^{(1)} \prod_{j=2}^{n} \psi_j$, then $q$ is unimodular function and satisfies that $q/q_i$ is inner for each $i=1,\ldots, n$, which implies $q_i H_2^- \subset qH_2^-$ by Lemma  \ref{lm:subspace}. As a result, we conclude that
	\[
		\sum_{i=1}^n \overline{\Span}_{k\ge 0}\{z^{-k} g^{(i)}\}=\sum_{i=1}^n q_iH_2^-\subset qH_2^-\neq L_2,
	\]
which leads to a contradiction. This completes the proof.

\bibliographystyle{IEEEtran}
\bibliography{refs}

\end{document}